\def\esup{\sup_\mu} 
\def\diam{diam}
   \def\a{\alpha} 
\def\e{\varepsilon}
\def\ep{\varepsilon}
\def\sk{\smallskip}
\def\mk{\medskip}
\renewcommand{\liminf}{\mathop{{\underline {\hbox{{\rm lim}}}}}}
\def\Z{\mathbb{Z}} 
\def\R{\mathbb{R}} 
\def\T{\mathbb{T}} 
\def\N{\mathbb{N}}
\def\Sf{\EuFrak{SFT}}
\def\1{\mathbf 1}
\def\emptyset{\varnothing}
\def\uy{\underline{y}}
\def\supp{\mbox{Supp } (}
\def\bequ{\begin{equation}}
\def\nequ{\end{equation}}
\def\bdef{\begin{defn}}
\def\ndef{\end{defn}}
\def\bthm{\begin{thm}}
\def\nthm{\end{thm}}
\def\bprop{\begin{prop}}
\def\nprop{\end{prop}}
\def\brmk{\begin{remarks}}
\def\nrmk{\end{remarks}}
\def\bdes{\begin{description}}
\def\ndes{\end{description}}
\begin{document}

\title*{On measures resisting multifractal analysis}
% Use \titlerunning{Short Title} for an abbreviated version of
% your contribution title if the original one is too long
 \author{J\"org Schmeling and St{\'e}phane Seuret}
% Use \authorrunning{Short Title} for an abbreviated version of
% your contribution title if the original one is too long
\institute{J\"org  Schmeling \at   Mathematics Centre for Mathematical Sciences,
Lund Institute of Technology, Lund University Box 118 SE-221 00 Lund, Sweden 
\email{joerg@maths.lth.se}
\and St\'ephane Seuret \at LAMA, CNRS UMR 8050,
Universit\'e Paris-Est  Cr\'eteil, 61 Avenue du
G\'en\'eral de Gaulle, 94010 Cr\'eteil Cedex, France
\email{seuret@u-pec.fr}}

%
% Use the package "url.sty" to avoid
% problems with special characters
% used in your e-mail or web address
%
\maketitle

\abstract*{Any ergodic measure of a smooth map on a compact manifold has a multifractal spectrum with one point - the dimension of the measure itself - at the diagonal. We will construct examples where this fails in the most drastic way for invariant measures invariant under  linear maps of the circle. } 

\abstract{Any ergodic measure of a smooth map on a compact manifold has a multifractal spectrum with one point - the dimension of the measure itself - at the diagonal. We will construct examples where this fails in the most drastic way for invariant measures invariant under  linear maps of the circle. }

\medskip

\begin{center}
{\em Dedicated to Victor Afraimovich on the occasion of his 65th birthday.}
\end{center}
 
%%%%%%%%%%%%%%%%%%%%%%%%%%%%%%%%%%%%
%%%%%%%%%%%%%%%%%%%%%%%%%%%%%%%%%%%%
%%%%%%%%%%%%%%%%%%%%%%%%%%%%%%%%%%%%
%%%%%%%%%%%%%%%%%%%%%%%%%%%%%%%%%%%%
%%%%%%%%%%%%%%%%%%%%%%%%%%%%%%%%%%%%
%%%%%%%%%%%%%%%%%%%%%%%%%%%%%%%%%%%%
%%%%%%%%%%%%%%%%%%%%%%%%%%%%%%%%%%%%
%%%%%%%%%%%%%%%%%%%%%%%%%%%%%%%%%%%%
\section{Introduction}\label{sec1}

Let $\mu$ be a probability measure on a metric space $(X,d)$. For $x\in\supp\mu)$
define
\[
d_\mu(x):=\liminf_{r\to 0}\frac{\log\mu (B(x,r))}{\log r}
\]
where $B(x,r)$ is the ball of radius $r$ centered at $x$. For $\a\ge 0$ we will consider the level sets
\[
D_\mu(\a):=\{x\in\supp\mu)\, :\, d_\mu(x)=\a\}.
\]
The multifractal spectrum of $\mu$ is given by
$$
f_\mu(\a):=  \left\{\begin{tabular}{ll} \smallskip$f_\mu(\a)=-\infty  $  & if $ D_\mu(\a) =\emptyset $,\\ \smallskip
$ \dim_H D_\mu(\a)$& otherwise.\end{tabular}\right.
$$
 
The dimension of a measure $\mu$ is defined as 
\begin{equation}
\label{defdimmu}
\dim_H\mu:=\inf\{\dim_HZ\, :\, \mu(Z)=1\}.
\end{equation}
It is well-known that
\begin{equation}\label{esup}
\dim_H\mu=\esup d_\mu(x),
\end{equation}
$\esup$ standing for the $\mu$-essential supremum. Hence it is likely that the graph of the function $f_\mu$ touches the diagonal at $\alpha=\dim_H\mu$. This phenomenon happens for any Gibbs measure associated with a H\"older potential invariant under a dynamical system, and we may wonder if this is a general property for measures, invariant measures or ergodic measures. In this note we will give examples of invariant measures that have a multifractal spectrum as far as possible off the diagonal. Indeed these measures can be chosen to be invariant under linear transformations of the circle. We will also remark that the same situation does not occur for ergodic measures, for which the multifractal spectrum always touches the diagonal.

%%%%%%%%%%%%%%%%%%%%%%%%%%%%%%%%%%%%
%%%%%%%%%%%%%%%%%%%%%%%%%%%%%%%%%%%%
\begin{theorem}\label{main}
For given $(a,b)\in [0,1]$ there is a probability measure $\mu$ supported on a compact Cantor set $K\subset [0,1]$ with the following properties:
\begin{itemize}
\item[i)] $\mu (I)>0$ for all non-empty open sets (in the relative topology) in $K$,
\item[ii)] $\dim_H\mu =b$,
\item[iii)]  if $S= \{ d _\mu(x): x\in K\}$  is the support of the multifractal spectrum of $\mu$, then $a=\min S$ and $b=\max S$. In particular, $d_\mu (x)\in [a,b]$ for all $x\in\supp\mu) =K$,
\item[iv)] $D_\mu(\a)$ contains at most one point for all $\a\ge 0$. 
 
\end{itemize}
\end{theorem}
%%%%%%%%%%%%%%%%%%%%%%%%%%%%%%%%%%%%
%%%%%%%%%%%%%%%%%%%%%%%%%%%%%%%%%%%%

 The exponent at which the multifractal spectrum touches the diagonal, when it exists,  is  characterized by many properties. Let us introduce two other spectra for measures.

%%%%%%%%%%%%%%%%%%%%%%%%%%%%%%%%%%%%
\begin{definition}
 For all integers $j\geq 1$, we denote by $\mathcal{G}_j$ the set of dyadic intervals of generation $j$ included in $[0,1]$, i.e. the intervals   $[k2^{-j}, (k+1)2^{-j})$, $k\in \{0,\cdots,2^j-1\}$. The Legendre spectrum of a Borel probability measure whose support is included in the interval $[0,1]$ is the map
$$ L_\mu:  \  \alpha \geq 0 \mapsto     \inf _{q\in \mathbb{R}} \ (\,  q\alpha - \tau_\mu(q) \,)   \  \ \ \in \R^+\cup\{-\infty\},$$
  where the scaling function $\tau_\mu$ is defined for $q\in \R$ as 
  $$ \tau_\mu(q):=\liminf_{j\to +\infty}    \frac{1}{-j}  \log_2   \sum_{I\in \mathcal{G}_j}  \mu(I)^q,$$
the sum being taken over the dyadic intervals with non-zero $\mu$-mass. 
\end{definition}
%%%%%%%%%%%%%%%%%%%%%%%%%%%%%%%%%%%%

The Legendre  spectrum is always defined on some interval $I \subset \R^+\cup\{+\infty\}$ (the extremal exponents may or may not belong to this interval), and is concave on its support. It is a trivial matter that there is at least one exponent $\alpha_\mu \geq 0$ such that 
\begin{equation}
\label{Ltouches}
L_\mu(\alpha_\mu)=\alpha_\mu.
\end{equation}
Comparing (\ref{defdimmu}), (\ref{esup}) and (\ref{Ltouches}), obviously when there is a unique exponent such that $f_\mu(\alpha)=\alpha$, then this exponent is also the dimension of the measure $\mu$ and also the one satisfying (\ref{Ltouches}). 

%%%%%%%%%%%%%%%%%%%%%%%%%%%%%%%%%%%%
 \begin{definition}
 The large deviations spectrum of a Borel probability measure whose support is included in the interval $[0,1]$ is defined as
$$ LD_\mu(\alpha) = \lim_{\e \to 0}  \   \liminf _{j\to \infty}  \  \frac{  \log_2  N_j(\alpha,\e) }  {j}$$
where
\begin{equation}
\label{defNJ}
N_j(\alpha,\e)\!:=\#\! \left\{  I \in \mathcal{G}_j : 2^{-j  (\a+\e)}  \leq   \mu(I )  \leq 2^{-j  (\a-\e)}   \right\} \!.
\end{equation}
\end{definition}
%%%%%%%%%%%%%%%%%%%%%%%%%%%%%%%%%%%%

By convention, if $N_j(\alpha,\e)=0$ for some $j$ and $\e$, then $ LD_\mu(\alpha) =-\infty$.

This spectrum describes the asymptotic behavior of the number of dyadic intervals of $\mathcal{G}_j$  having a given $\mu$-mass.  The fact that the values of the large deviations spectrum are accessible for real data (by algorithms based on  log-log estimates) makes it interesting from a practical standpoint. In the paper \cite {RIEDI}   for instance,   it is proved that the concave hull of $f_\mu$ coincides with the Legendre spectrum of $\mu$  on the support of this Legendre spectrum.
One always has for all exponents $\alpha\geq 0$
$$f_\mu(\alpha) \leq LD_\mu(\alpha) \leq L_\mu(\alpha),$$ and  when the two spectra $f_\mu$ and $L_\mu$ coincide at some $\alpha\geq 0$, one says that the {\em multifractal formalism} holds at $\alpha$. 
Actually, when the multifractal formalism holds, the three spectra (multifractal, large deviations and Legendre) coincide.

For the measure we are going to construct, the multifractal formalism does not hold at $\alpha_\mu$, nor at any exponent. 
This is the reason why we claim that this measure is "as far as possible" from being multifractal.

%%%%%%%%%%%%%%%%%%%%%%%%%%%%%%%%%%%%
%%%%%%%%%%%%%%%%%%%%%%%%%%%%%%%%%%%%
\begin{theorem}\label{main2}
For the measure $\mu$ of Theorem \ref{main},   we have: 
\begin{itemize}
\item[i)] $f_\mu(\alpha) = 0$ for every $\alpha \in S$, and  $f_\mu(\alpha) = - \infty$ for every $\alpha \in [a,b]\setminus S$,

\item[ii)] $ \ LD_\mu(\alpha) = \alpha$ for every $\alpha \in S$, and  $LD_\mu(\alpha) = - \infty$ for every $\alpha \in \mathbb{R}_+\setminus S$,
\item[iii)] $ \ L_\mu(\alpha) = \alpha$ for every $\alpha \in [a,b]$, and is $-\infty$ elsewhere. \\
The scaling function of $\mu$ is 
\[\tau_\mu(q) = \left\{\begin{tabular}{ll}  $b(1-q$) & $\mbox { if } q\leq 1$\\ $a(1-q)$ & $\mbox{ if } q>1$.
\end{tabular}\right. \]
\end{itemize}

\end{theorem}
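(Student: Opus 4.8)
The plan is to exploit the fact that all the local-exponent information about $\mu$ from Theorem \ref{main} must be transferred from the natural symbolic coding of $K$ to dyadic intervals, and that $S$ being (in general) a ``small'' set — achieved in the construction by arranging the Birkhoff-type averages governing $\log\mu(B(x,r))/\log r$ to oscillate between values determined by $a$ and $b$ — forces each level set $D_\mu(\alpha)$ to be at most a single point. First I would establish part (i): the upper bound $f_\mu(\alpha)\le \dim_H D_\mu(\alpha)$ is immediate from iv) of Theorem \ref{main}, since a one-point (or empty) set has Hausdorff dimension $0$ (resp. $-\infty$), and $S$ is by definition iii) exactly the set of attained exponents, so $f_\mu(\alpha)=-\infty$ off $S$. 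This step is essentially a restatement of Theorem \ref{main} and should be short.

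Next, for part (iii), I would compute the scaling function $\tau_\mu(q)$ directly from the construction. The measure is built on a Cantor set with a prescribed hierarchy of generations; on each generation the mass is split so that the ``typical'' dyadic interval $I\in\mathcal{G}_j$ carries mass roughly $2^{-jb}$, while a controlled sparse family of intervals carries mass as large as $2^{-ja}$ (the minimal-exponent behaviour) — this is exactly what makes $\min S = a$ and $\max S = b$. Then $\sum_{I\in\mathcal G_j}\mu(I)^q$ is dominated, for $q\le 1$, by the many intervals of mass $\approx 2^{-jb}$ whose cardinality is $\approx 2^{jb}$ (giving $\tau_\mu(q)=b(1-q)$ after taking $\liminf_j \frac{1}{-j}\log_2$), and for $q>1$ by the few intervals of largest mass $\approx 2^{-ja}$ (giving $a(1-q)$); one must check along the right subsequence of scales $j$ that both regimes genuinely dominate, which is where the $\liminf$ in the definition of $\tau_\mu$ is used. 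Once $\tau_\mu$ is known, $L_\mu(\alpha)=\inf_q(q\alpha-\tau_\mu(q))$ is the Legendre transform of this piecewise-linear function: a short convex-analysis computation gives $L_\mu(\alpha)=\alpha$ for $\alpha\in[a,b]$ and $-\infty$ elsewhere, as claimed.

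For part (ii), the sandwich $f_\mu(\alpha)\le LD_\mu(\alpha)\le L_\mu(\alpha)$ quoted in the excerpt already gives $LD_\mu(\alpha)\le \alpha$ on $[a,b]$ and $LD_\mu(\alpha)=-\infty$ outside $[a,b]$; combined with (i) it gives $LD_\mu(\alpha)\ge f_\mu(\alpha)=0$ on $S$, which is not yet enough. The lower bound $LD_\mu(\alpha)\ge\alpha$ for $\alpha\in S$ must come from counting: one shows directly that for $\alpha\in S$ there is a subsequence $j_k\to\infty$ along which $N_{j_k}(\alpha,\e)\ge 2^{j_k(\alpha-o(1))}$ — that is, although the points realising exponent $\alpha$ form only a single point, the dyadic intervals at scale $j_k$ whose $\mu$-mass is close to $2^{-j_k\alpha}$ are exponentially numerous (with exponent $\alpha$). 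This is arranged in the construction: at the scales where the mass distribution ``looks like'' exponent $\alpha$, a proportion $\approx 2^{j_k\alpha}$ of the $\approx 2^{j_k b}$ active intervals have mass $\approx 2^{-j_k\alpha}$. Finally, for $\alpha\in\mathbb R_+\setminus S$ but still in $[a,b]$ one must show $LD_\mu(\alpha)=-\infty$, i.e. $N_j(\alpha,\e)=0$ for some $j$ and $\e$; this again reads off the construction, since the only mass scales that occur are those coding exponents in $S$, so a gap in $S$ is a scale-uniform gap in the multiset of values $\{-\frac1j\log_2\mu(I)\}$.

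I expect the main obstacle to be part (ii), specifically producing the matching \emph{lower} bound $LD_\mu(\alpha)\ge\alpha$ on $S$ with the correct \emph{liminf} over $j$: one needs the exponential abundance of dyadic intervals of mass $\approx 2^{-j\alpha}$ to hold along a subsequence of scales while simultaneously, at those same scales, no \emph{other} exponent is ``spread out'' enough to spoil the bookkeeping — and one must bridge the passage from the intrinsic metric on $K$ (and its natural coding) to the ambient dyadic grid $\mathcal G_j$, controlling the distortion between a cylinder of the construction and the dyadic intervals it meets. This dyadic-versus-intrinsic comparison, together with verifying that the construction in Theorem \ref{main} can be carried out so that all three statements hold along compatible sequences of scales, is the technical heart; the Legendre-transform computation in (iii) and the upper bounds everywhere are routine by comparison.
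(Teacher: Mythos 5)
Your overall architecture matches the paper's: part (i) is read off Theorem \ref{main} iii)--iv); the scaling function is obtained by deciding which mass-scale dominates $\sum_{I\in\mathcal G_j}\mu(I)^q$ in each regime of $q$ (the $\approx 2^{jb}$ intervals of mass $\approx 2^{-jb}$ for $q\le 1$, the few intervals of mass $\approx 2^{-ja}$ for $q>1$), $L_\mu$ follows by Legendre transform, and the large deviations spectrum is handled by direct counting plus the observation that a spectral gap in $S$ forces $N_j(\alpha,\e)=0$ at all large scales. The paper's actual order is slightly different (it proves the two-sided counting estimate $N_j(\alpha,\e)\approx 2^{j\alpha}$ first and then feeds it into the partition sum via a finite $\e$-cover of $S$ by intervals $[\alpha_n-\e,\alpha_n+\e]$), but that is a cosmetic difference.

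There is, however, one genuine gap in how you state the key lower bound. You propose to show that \emph{along a subsequence} $j_k$ one has $N_{j_k}(\alpha,\e)\ge 2^{j_k(\alpha-o(1))}$, and you repeat this "along a subsequence of scales" framing in your closing paragraph. Since $LD_\mu(\alpha)$ is defined with a $\liminf$ over $j$, a subsequential bound only controls the $\limsup$ version and does \emph{not} yield $LD_\mu(\alpha)\ge\alpha$. The construction in fact delivers the bound at \emph{every} large scale $j$, and the mechanism is worth naming: by the strict monotonicity of $x\mapsto d_\mu(x)$ on $K$, all dyadic intervals of mass comparable to $|I|^{\alpha}$ at scale $j$ lie inside the single construction-cylinder of generation $J_\e$ containing $x_\alpha$; inside that cylinder every subshift used thereafter has entropy within $O(\e)$ of $\alpha$, so the number of nonempty dyadic subintervals at \emph{each} scale $j$ is pinched between $2^{(\alpha-2\e)j}$ and $2^{(\alpha+2\e)j}$, and each of them has mass in $[|I|^{\alpha+3\e},|I|^{\alpha-3\e}]$. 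This gives the lower bound for the $\liminf$, and simultaneously disposes of your worry that some "other exponent" could spoil the bookkeeping: for the lower bound extraneous intervals can only increase $N_j$, and for the upper bound the same localization shows that intervals realizing mass exponent near $\alpha$ can only sit near $x_\alpha$. With the subsequence claim replaced by this all-scales statement, your plan coincides with the paper's proof.
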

%%%%%%%%%%%%%%%%%%%%%%%%%%%%%%%%%%%%
%%%%%%%%%%%%%%%%%%%%%%%%%%%%%%%%%%%%
Hence the three spectra differ very drastically.

 \bigskip

 The article is organized as follows. Section \ref{ergodic} discusses the difference between ergodic and invariant measures as regards to our problem. Section \ref{sec_main} contains the construction of a measure $\mu$ supported by a Cantor set whose multifractal spectrum does not touch the diagonal. In Section \ref{SecLD}, we compute the Legendre and the large deviations spectra of $\mu$. 
 
 %Finally, in Section \ref{SecHomogeneous}, we explain how to obtain a measure whose support is the full interval $[0,1]$ with the same multifractal spectrum as $\mu$.
 
% 
% \begin{com}
%I think the construction below can be improved to get any measurable graph of the spectrum and also the measure having full support. Since you worked on this problem with Zoltan I dont mind to invite him to this project.
%\end{com}  

% 
% \begin{com}
%I am not sure, but the prescription of exponents is definitely different from the prescription of spectrum.
%\end{com}  

%%%%%%%%%%%%%%%%%%%%%%%%%%%%%%%%%%%%%%%%%%%%%%%%%%%%%%%%%%%%%%%%%%%%%%%%%%%%%%%%%
%%%%%%%%%%%%%%%%%%%%%%%%%%%%%%%%%%%%%%%%%%%%%%%%%%%%%%%%%%%%%%%%%%%%%%%%%%%%%%%%%
%%%%%%%%%%%%%%%%%%%%%%%%%%%%%%%%%%%%%%%%%%%%%%%%%%%%%%%%%%%%%%%%%%%%%%%%%%%%%%%%%
%%%%%%%%%%%%%%%%%%%%%%%%%%%%%%%%%%%%%%%%%%%%%%%%%%%%%%%%%%%%%%%%%%%%%%%%%%%%%%%%%
%%%%%%%%%%%%%%%%%%%%%%%%%%%%%%%%%%%%%%%%%%%%%%%%%%%%%%%%%%%%%%%%%%%%%%%%%%%%%%%%%
%%%%%%%%%%%%%%%%%%%%%%%%%%%%%%%%%%%%%%%%%%%%%%%%%%%%%%%%%%%%%%%%%%%%%%%%%%%%

\section{Ergodic and Invariant measures}\label{ergodic}

First we prove that the multifractal spectrum of ergodic measures always touches the diagonal.

\begin{theorem}\label{ergspec}
Let $\mu$ be an ergodic probability measure invariant under a $C^1$--diffeomorphism $T$ of a compact manifold $M$. Then $f_\mu(\dim_H\mu)=\dim_H\mu$.
\end{theorem}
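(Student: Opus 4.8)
The strategy is to reduce the statement to a known local-entropy / dimension result for ergodic measures, by combining two facts: the Brin--Katok formula for the local entropy and the characterization of the dimension of an ergodic measure in terms of Lyapunov exponents. First I would recall that, since $\mu$ is ergodic and $T$ is $C^1$, the pointwise dimension $d_\mu(x)$ exists and is $\mu$-almost everywhere equal to a constant, which by \eqref{esup} must equal $\dim_H\mu$. (If one does not want to invoke the existence of the pointwise dimension in full generality, one can instead argue with $\liminf$ directly: by \eqref{esup} the set $\{x : d_\mu(x)\ge \dim_H\mu\}$ has zero measure unless $d_\mu(x)=\dim_H\mu$ on a set of positive, hence by ergodicity full, measure — but one must be a little careful, since $d_\mu$ need not be $T$-invariant when $T$ is merely $C^1$; the cleanest route is to quote the Young/Ledrappier--Young type theory giving $\mu$-a.e. constancy of $d_\mu(x)$.)

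The key step is then: the set $D_\mu(\dim_H\mu) = \{x\in K : d_\mu(x)=\dim_H\mu\}$ has full $\mu$-measure, so in particular it is non-empty, and since $\mu(D_\mu(\dim_H\mu))=1$ the definition \eqref{defdimmu} of $\dim_H\mu$ gives
\[
\dim_H D_\mu(\dim_H\mu) \ge \dim_H\mu .
\]
For the reverse inequality $\dim_H D_\mu(\alpha)\le \alpha$ one uses the standard mass-distribution / Billingsley-type bound: if $d_\mu(x)\ge\alpha$ on a set $E$ (which holds on $D_\mu(\alpha)$ with equality), then for every $\delta>0$ a Vitali covering argument shows $\mathcal H^{\alpha+\delta}(E)<\infty$, hence $\dim_H E\le\alpha+\delta$; letting $\delta\to0$ gives $\dim_H D_\mu(\alpha)\le\alpha$. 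Applying this with $\alpha=\dim_H\mu$ and combining with the lower bound yields $f_\mu(\dim_H\mu)=\dim_H D_\mu(\dim_H\mu)=\dim_H\mu$, which is the claim.

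The main obstacle is the $\mu$-almost everywhere constancy (and existence) of the pointwise dimension for an ergodic measure under a merely $C^1$ diffeomorphism of a compact manifold: this is exactly where one needs the hard analytic input (Brin--Katok's formula for $h_\mu(T)$ together with the Lyapunov-exponent description of $\dim_H\mu$, i.e. the exact-dimensionality theorem of Barreira--Pesin--Schmeling / Ledrappier--Young). Once that is granted, everything else is the soft measure-theoretic packaging above. An alternative, more self-contained line — avoiding exact dimensionality — would be to show directly that $\{x:d_\mu(x)\le\dim_H\mu\}$ has full measure (immediate from \eqref{esup}, since $d_\mu\le\dim_H\mu$ would fail on a positive-measure set otherwise, contradicting that $\dim_H\mu$ is the essential supremum only if we also know $d_\mu\ge\dim_H\mu$ a.e.), so the real content is again the a.e. lower bound $d_\mu(x)\ge\dim_H\mu$, which is the nontrivial half of \eqref{esup} and rests on the same dynamical machinery. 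I would therefore present the proof as: quote exact dimensionality to get $d_\mu=\dim_H\mu$ $\mu$-a.e.; deduce $\mu(D_\mu(\dim_H\mu))=1$ hence $\dim_H D_\mu(\dim_H\mu)\ge\dim_H\mu$; prove the elementary upper bound $\dim_H D_\mu(\alpha)\le\alpha$; conclude.
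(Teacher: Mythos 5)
There is a genuine gap, and it sits exactly at the step you yourself flag as the ``hard analytic input.'' Your plan rests on quoting exact dimensionality (Ledrappier--Young / Barreira--Pesin--Schmeling) to get $\mu$-a.e.\ constancy of $d_\mu$. That theorem is not available under the hypotheses of the statement: it requires $C^{1+\alpha}$ regularity and a hyperbolic (nonzero Lyapunov exponents) measure, whereas here $T$ is only $C^1$ and $\mu$ is an arbitrary ergodic measure. Worse, the hedge you offer as an alternative is based on a false premise: you write that ``$d_\mu$ need not be $T$-invariant when $T$ is merely $C^1$,'' but it \emph{is} invariant, and this is the entire content of the paper's proof. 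Since $T$ is a $C^1$ diffeomorphism of a \emph{compact} manifold, $\|D_xT\|$ and $\|(D_xT)^{-1}\|^{-1}$ are bounded, so there is $C>1$ with $B(Tx,C^{-1}r)\subset T(B(x,r))\subset B(Tx,Cr)$; combined with $\mu\circ T^{-1}=\mu$ this sandwiches $\mu(B(Tx,r))$ between $\mu(B(x,C^{-1}r))$ and $\mu(B(x,Cr))$, and a bounded multiplicative change of radius does not affect the $\liminf$ defining $d_\mu$. Hence $d_\mu\circ T=d_\mu$, ergodicity makes $d_\mu$ a.e.\ constant, and (\ref{esup}) identifies the constant as $\dim_H\mu$. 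Note also that your other fallback --- trying to squeeze a.e.\ constancy out of (\ref{esup}) alone --- cannot work: the essential supremum being $\dim_H\mu$ only gives $d_\mu\le\dim_H\mu$ a.e.\ together with positive measure above every level $\dim_H\mu-\e$; it does not rule out $d_\mu<\dim_H\mu$ everywhere. Some dynamical input is unavoidable, and the invariance argument is the correct (and elementary) one.

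The soft packaging in your proposal is fine and in fact more explicit than the paper's: once $\mu(D_\mu(\dim_H\mu))=1$, definition (\ref{defdimmu}) gives $\dim_H D_\mu(\dim_H\mu)\ge\dim_H\mu$, and the standard Billingsley/Vitali bound gives $\dim_H D_\mu(\alpha)\le\alpha$ for every $\alpha$. So the repair is simply to replace your appeal to exact dimensionality by the invariance-of-$d_\mu$ argument above.
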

\begin{proof}
Since $T$ is a smooth diffeomorphism on a compact manifold both the norm $\|D_xT\|$ and the conorm $\|(D_xT)^{-1}\|^{-1}$ are bounded on $M$. Hence, there is a $C>1$ such that for any $x\in M$ and any $r>0$
\[
B(Tx,C^{-1}r)\subset T(B(x,r))\subset B(Tx,Cr).
\]
This immediately implies that $d_\mu$ is a (of course measurable) invariant function. By ergodicity of $\mu$ it takes exactly one value for $\mu$--a.e. $x\in M$. By~(\ref{esup}) this value equals $\dim_H\mu$. 

\end{proof}

Contrarily to what happens for ergodic measures, a general invariant measure behaves as bad as a general probability measure. We will illustrate this on a simple example.
%
%\begin{example}\label{invgen}
Consider the (rational) rotation $x\to x+\frac12 \pmod 1$ on the unit circle $\T=\R/\Z$. This transformation is not uniquely ergodic and has plenty of invariant measures. By the Ergodic Decomposition Theorem the space $M_{inv}$ of invariant measures equals
\[
\left\{\mu:=\frac12\int_{[0,1/2]}(\delta_x+\delta_{x+1/2})\, d\nu (x) \, :\, \nu \mbox{ is a probability measure on $[0,1/2)$}\right\}.
\]
W.l.o.g. assume that $x\in [0,1/2)$ and $r>0$ is sufficiently small. Then
\[
\mu (B(x,r))=\frac12\int_{B(x,r)}\, d\nu = \frac12\nu (B(x,r)).
\]
Hence,
$$
d_\mu(x)=d_\nu(x)\qquad\mbox{and}\qquad f_\mu(\a)=f_\nu(\a).
$$

In particular, using the example built in the following sections, there is a measure with a multifractal spectrum not touching the diagonal, which can not happen for an ergodic measure.

%\end{example}
%
%\begin{com}
%Of course if one takes the idendity map of a space then all measures are invariant but I thought this example is a bit more interesting. However it is like a ''double cover of the idendity''. If you think that this is cheating we could think of ''blowing up the points'' to more interesting dynamical systems. A bit more thought could give measures invariant under multiplication by 2 given by
%\[
%\left\{\mu:=\frac12\int_{[0,1/2]}\mu_p\, d\nu (p) \, :\, \nu \mbox{ is a probability measure on $[0,1/2)$}\right\}
%\]
%where $\mu_p$ is the probability measure generated by $(p,1-p)$. I guess a similar phenomenon should occur. What do you think? Maybe it is not so complicated to restrict (w.l.o.g.) the computations to dyadic intervals.

%\end{com}

%%%%%%%%%%%%%%%%%%%%%%%%%%%%%%%%%%%%%%%%%%%%%%%%%%%%%%%%%%%%%%%%%%%%%%%%%%%%%%%%%
%%%%%%%%%%%%%%%%%%%%%%%%%%%%%%%%%%%%%%%%%%%%%%%%%%%%%%%%%%%%%%%%%%%%%%%%%%%%%%%%%
%%%%%%%%%%%%%%%%%%%%%%%%%%%%%%%%%%%%%%%%%%%%%%%%%%%%%%%%%%%%%%%%%%%%%%%%%%%%%%%%%
%%%%%%%%%%%%%%%%%%%%%%%%%%%%%%%%%%%%%%%%%%%%%%%%%%%%%%%%%%%%%%%%%%%%%%%%%%%%%%%%%
%%%%%%%%%%%%%%%%%%%%%%%%%%%%%%%%%%%%%%%%%%%%%%%%%%%%%%%%%%%%%%%%%%%%%%%%%%%%%%%%%
%%%%%%%%%%%%%%%%%%%%%%%%%%%%%%%%%%%%%%%%%%%%%%%%%%%%%%%%%%%%%%%%%%%%%%%%%%%%%%%%%

\section{The main construction}\label{sec_main}
 
We will represent the numbers $x$ in $[0,1]$ by their dyadic expansion, i.e. $x=\sum_{j\geq 1} x_j2^{-j}$, $x_j\in \{0,1\}$. The construction will avoid the dyadic numbers so that no ambiguity will ocur.  For $x\in [0,1]$, the prefix of order $J$ of $x$ is $x_{|J} =  \sum_{j= 1}^J x_j2^{-j}$. We will also use the notation $x=x_1x_2\cdots x_j\cdots$, and $x_{|J}= x_1\cdots x_J$.

A cylinder $C=[x_1x_2\cdots x_J]$ consists of the real numbers $x$ with prefix of order $J$ equal to $x_1x_2\cdots x_J$. The length $J$ of such a cylinder is denoted by $|C|=J$.  We denote by $\mathcal{G}_J$ the cylinders of  length $J$.  The concatenation of two cylinders $C_1=[x_1 \cdots x_J]$ and $C_2=[y_1 \cdots y_{J'}]$ is the cylinder $[x_1 \cdots x_J y_1 \cdots y_{J'}]$, and is denoted $C_1C_2$.

\sk

We stand some facts about subshifts of finite type.
First we remark that given any non-empty interval $I\subset [0,\log 2]$ there is a mixing subshift of finite type that has entropy $h_{top}(\Sigma)\in I$. We denote the set of all mixing subshifts of finite type by $\Sf$. For $\Sigma\in\Sf$ the unique measure of maximal entropy is denoted by $\mu_\Sigma$. By standard theorems,  there is a constant $M_\Sigma$ depending only on $\Sigma$ such that t for any cylinder $C_J \in\Sigma$ of length $J$
\[
M_\Sigma^{-1}  \, 2^{-h_{top}(\Sigma)J}<\mu_\Sigma(C_J)< M_\Sigma \, 2^{-h_{top}(\Sigma)J}.
\]
In addition, for the same constant $M_\Sigma$, we have
$$M_\Sigma^{-1}  \, 2^{h_{top}(\Sigma)J}<\#\{C\in \mathcal{G}_J: C\in \Sigma\} < M_\Sigma \, 2^{h_{top}(\Sigma)J}.
$$
Of course the two last double-sided inequalities are complementary.
\medskip

We now proceed to the construction of the measure $\mu$ of Theorem \ref{main}.

\medskip

{\bf Step 1:} We fix a map $\Sigma\colon \bigcup_{J=1}^\infty \{0,1\}^J\to \Sf$ with the property that
\[
h_{top}(\Sigma(y_1\cdots y_J))\in (b-a)\left[\sum_{j=1}^{J-1}\frac{2y_j}{3^j}+\left(\frac{2y_J}{3^J},\frac{2y_J+1}{3^J}
\right) \right]+a.
\]
This map is increasing in the sense that if $t_1\cdots t_J < y'_1\cdots y'_J$ (using the lexicographic order), then $h_{top}(\Sigma(y_1\cdots y_J)) < h_{top}(\Sigma(y'_1\cdots y'_J))$.

\bigskip

{\bf Step 2:} For $\Sigma\in\Sf$ and $\delta >0$, define
$$
 N(\Sigma,\delta)  \!   := \!  \min\left\{ \!J\in\N  : \!\! \left\{
  \begin{tabular}{ll}    \smallskip   $  \ \forall \ j\geq J, \ \  \forall  \ C_j \in\Sigma \mbox{ of length $j$},$ \\  \smallskip
   $\ 2^{-(h_{top}(\Sigma)+\delta)j}<\mu_\Sigma(C_j )< 2^{-(h_{top}(\Sigma)-\delta)j}  $ \\ \smallskip
 $  \mbox{ and }  \forall \ j\geq J, $\\ \smallskip
$ \ 2^{ (h_{top} (\Sigma)-\delta) j}  <  \# \{C\in \mathcal{G}_j: C\in \Sigma \}< 2^{(h_{top}(\Sigma)+\delta)j}  \ $
  \end{tabular}  \right.
 \!   \!  \right\}     \! .
$$
The numbers $N(\Sigma,\delta)$ allow us to estimate the time we have to wait until we see an almost precise value of the local entropy for a given subshift of finite type. Moreover, we have also a control of the number of cylinders of length $j\geq N(\Sigma,\delta)$ in $\Sigma$.
We then set
$$
 \delta_{J}  = \frac{b-a}{6 \cdot 2^J} \ \mbox{ and } \ \  N_J  :=   \max\Big\{N\Big(\Sigma(y_1\cdots y_J) , \delta_{J} \Big)\, :\, y_1\cdots y_J \in \{0,1\}^J\Big\}.
$$

\bigskip

{\bf Step 3:} Let   $y_1\cdots y_J\in \{0,1\}^J$. For a given cylinder $C_j$ of length $j$ in $\Sigma(y_1\cdots y_J)$,    there is a smallest integer $m_{C_j}$ for which for every cylinder $C'_m$ of length $m\geq m_{C_j}$ in $\Sigma(y_1...y_{J-1})$, we have
\begin{eqnarray}
\label{eq1}
 \ \  2^{-(h_{top}(\Sigma(y_1\cdots y_{J-1}))+\delta_J)(m+j)} < && \hspace{-6mm} \mu_{\Sigma(y_1\cdots y_{J-1})}(C'_m)\cdot\mu_{\Sigma(y_1\cdots y_J)}(C_j)\\ 
\nonumber
 && \ \ \ <   \   2^{-(h_{top}(\Sigma(y_1\cdots y_{J-1}))-\delta_J)(m+j)}.
\end{eqnarray}  
This property holds, since  we know that it holds for large $m$.

Then,  let  $$m_j:= \max\{m_{C_j} : C_j \in \Sigma(x_1\cdots x_J) \mbox{ and } |C_j|=j\}.$$
By construction, for every cylinder  $C_j$ of length $j$,  for every integer $m \geq m_j$, for every  cylinder  $C'_m \in \Sigma(y_1...y_{J-1})$, (\ref{eq1}) is true.

\smallskip

Then, we set  
$$M(y_1\cdots y_J) := \max \Big\{m_j : j \in \{1,2,\cdots, N_J\} \Big\}.$$

The numbers $M(y_1\cdots y_J)$ allow us to estimate how long the cylinders in the prefix subshift have to be to control the local entropy at a concatinated cylinder. 

%Heuristically, when $k$ is large,  $M(x_1\cdots x_k,\delta) \ge N(x_1\cdots x_k,\delta) $. 
 
 Finally, for every $J\geq 1$, we define the integer
\begin{eqnarray*}
M_J & := & \max\Big\{M\big(y_1\cdots y_J  \big)\, :\, y_1\cdots y_J \in \{0,1\}^J \Big\}.
\end{eqnarray*}

\bigskip

{\bf Step 4:} Choose a lacunary sequence $(L_J)_J$ with
\[
\frac{L_J}{\sum_{j=1}^J M_j+ N_j}\ge 2 \ \ \mbox{ and} \ \   \frac{L_{J+1}}{L_J}  \geq  \frac{2}{ \delta_{J+1} }  .
\]

Now we are ready to proceed with the construction of the measure $\mu$.

\bigskip

{\bf Step 5:} We will construct the measure by induction on dyadic cylinders. We set $K_1:=[0,1]$ and start with labelling the cylinder $[0]$ with $y_1=0$ and $[1]$ with $y_1=1$. For a subshift of finite type $\Sigma\in\Sf$ we denote by $\Sigma\vert_J$ all non-empty dyadic cylinders in $\Sigma$ of length $J\in\N$. Now we define
\[
K_2:=[0]\Sigma (0)\vert_{L_2}\cup [1]\Sigma (1)\vert_{L_2}.
\]
We will label a cylinder  $C_{L_2+1}$ in $[0]\Sigma (0)\vert_{L_2}$ (a similar labelling for $[1]\Sigma (1)\vert_{L_2}$)  by $y_1y_2(C_{L_2+1})=00$ iff
\[
C_{L_2+1}\cap \Big[\min\{ x\in [0]\Sigma (0)\vert_{L_2}\},\min\{ x\in [0]\Sigma (0)\vert_{L_2}\}+\frac12\diam [0]\Sigma (0)\vert_{L_2}\Big]\ne\emptyset,
\]
and by $y_1 y_2(C_{L_2+1})=01$ else. This way we have that for every $y_1y_2\in \{0,1\}^2$, 
\[
\diam\left(\bigcup_{y_1 y_2(C_{1+L_2})=y_1y_2} C_{1+L_2}\right)\le\frac14.
\]

Assume that for $J\geq 2$, we have defined $K_J$ as the union of cylinders of length $1+L_2+\cdots +L_J$ labelled by binary sequences $y_1\cdots y_J$ of length $J$. Moreover assume that for the defining cylinders of $K_J$, we managed the construction so that $y_1 \cdots y_J \in \{0,1\}^J$, 
\[
\diam\left(\bigcup_{y_1\cdots y_J(C_{1+L_2+\cdots +L_J})=y_1\cdots y_J} C_{1+L_2+\cdots +L_J}\right)\le\frac1{2^J}.
\]
We define the Cantor set at the $J+1$-th generation as
\[
K_{J+1}:=\bigcup_{ y_1\cdots y_ J \in\{0,1\}^J}\, \,\bigcup_{C\in K_J: \, y_1\cdots y_J(C)=y_1\cdots y_J} C\Sigma (y_1\cdots y_J)\vert_{L_{J+1}}.
\]
As above, we will label a cylinder  $C_{1+L_2+\cdots +L_{J+1}}$ in $C\Sigma (y_1\cdots y_J)\vert_{L_{J+1}}$  (where the cylinder $C$ is labelled   $y_1\cdots y_J(C)= y_1\cdots y_J$) by the word $y_1\cdots y_{J+1}(C_{1+L_2+\cdots +L_{J+1}})=y_1\cdots y_J0$ if and only if the cylinder  $C_{1+L_2+\cdots +L_{J +1}}$ has non-empty intersection with the interval
\begin{eqnarray*}   
\Big[&&\!\!\!\!\!\!\! \min\{ x\in C\Sigma (y_1\cdots y_J)\vert_{L_{J+1}}\},\\
&&\!    \!\!\!\! \ \!\!\!\!\!\! \ \min\{ x\in C\Sigma (y_1\cdots y_J)\vert_{L_{J+1}}\}+\frac12\diam C\Sigma (y_1\cdots y_J)\vert_{L_{J+1}} \Big],
\end{eqnarray*}
and by $y_1\cdots y_{J+1}(C_{1+L_2+\cdots +L_{J+1}})=y_1\cdots y_J 1$ else. This way we ensure that
\begin{equation}\label{diam}
\diam\left(\bigcup_{y_1\cdots y_{J+1}(C_{1+L_2+\cdots +L_{J+1}})=y_1\cdots y_{J+1}} C_{1+L_2+\cdots +L_{J+1}}\right)\le\frac1{2^{J+1}}.
\end{equation}

\begin{figure} 
  \includegraphics[width=11cm,height =9.1cm]{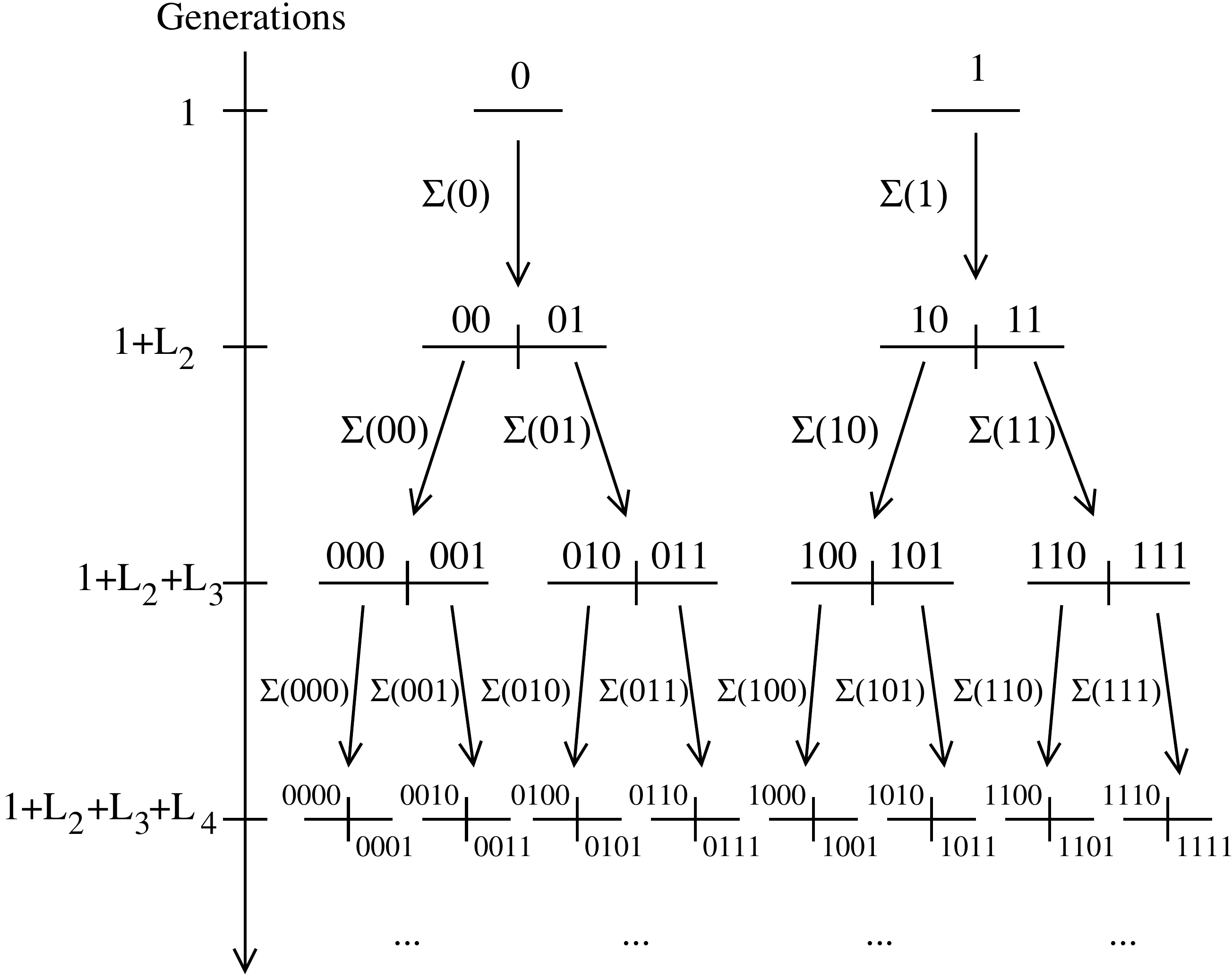} 
  \caption{Construction's scheme of the measure $\mu$.}
\end{figure}
\bigskip

{\bf Step 6:} We define the Cantor set 
\[
K:=\bigcap_{J\geq 2} K_J.
\] 
It has the following properties:
\begin{itemize}
\sk
\item[a)] $K$ is compact,
\sk
\item[b)] for $x\in K$, we have a   labelling sequence $\uy (x) =y_1\cdots y_J\cdots \in\{0,1\}^\infty$, and we will use the obvious notation $y_1\cdots y_J(x)$,
\sk
\item[c)] by the choice of the labelling and the function $\Sigma$ we have for any $x\in K$ that the limit
\[
h(x):=\lim_{J\to\infty}h_{top}(\Sigma(y_1\cdots y_J(C_{1+L_2+\cdots L_J}(x))))
\]
exists, where $C_{1+L_2+\cdots L_J}(x)$ denotes the unique dyadic cylinder of length $1+L_2+\cdots L_J$ containing $x$.

\sk
\item[d)] for $(x,x')\in K^2$, we have $\uy(x)=\uy(x')\iff x=x'$ (this is immediate from~(\ref{diam})). More precisely if $x<x'$ then $\uy(x)<\uy(x')$ (in lexicographical order) and by the choice of the function $\Sigma$
\[
h(x)<h(x').
\] 

\sk 

\item[e)] $\dim_HK=b$.
\end{itemize}
%
%\begin{com}
%Do you think that any of these properties needs a proof/explanation?
%\end{com}
%\begin{com}
%No, except an argument for the dimension, since (if I understand correctly) the dimension is found using the measure that we construct below. Maybe we can skip the dimension here, or say that it will follow from the measure construction below.
%\end{com}

\sk\sk

{\bf Step 7:} We define the measure $\mu$ on the cylinder sets
\[
\left\{C\, :\, |C|=1+L_2+\cdots L_J , \ J\geq 2 \mbox{ and } C\cap K\ne\emptyset\right\}.
\]
Any such cylinder can be written as
\begin{equation}
\label{eq3}
C=C_1C_2\cdots C_J, \mbox{ where $ |C_j|=L_j$ and $ C_j\cap\Sigma(y_1\cdots y_j(C_1\cdots C_j))\ne\emptyset $}.
\end{equation}
Then we set
\[
\mu(C)\, :=  \, \frac12\prod_{j=2}^J   \mu_{\Sigma(y_1\cdots y_j(C_1\cdots C_j))}(C_j).
\]
This is clearly a ring of subsets and hence by Caratheodory's extension theorem we get a measure on $[0,1]$ with support $K$.
It has the following properties:
\begin{itemize}
\sk
\item[a)] $\supp\mu) =K$,
\sk
\item[b)] for $x\in K$ we have 
\[
d_\mu(x)=\frac{h(x)}{\log2}\in [a,b],
\]
\sk
\item[c)] for $I\cap K\ne\emptyset$ with $I$ an interval we have that $\mu(I)>0$,

\sk
\item[d)] From item d) in {\bf Step 6} combined with the previous item,  if $(x,x')\in K^2$ and  $x<x'$, then  
\[
d_\mu(x)<d_\mu(x').
\]
 Hence $D_\mu(\a)$ consists of at most one point.

\sk
\item[e)] $\dim_H\mu=b$ since $\esup d_\mu=b$.
\end{itemize}
\sk

In the above statements, only item b) needs an explanation. Once it will be proved, items c), d) and e) will follow directly using obvious arguments.

%%%%%%%%%%%%%%%%%%%%%
\begin{proposition}
For every  $x\in K$, $\displaystyle d_\mu(x)= \frac{h(x)}{\log 2}$.
\end{proposition}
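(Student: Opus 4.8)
The plan is to establish, for each $x\in K$, matching upper and lower bounds on $\mu(B(x,r))$ in terms of the cylinders containing $x$, and then pass to the limit. Fix $x\in K$ with labelling sequence $\uy(x)=y_1y_2\cdots$. For each $J\geq 2$ write $G_J:=1+L_2+\cdots+L_J$ for the generation at which the $J$-th block is completed, and let $C^{(J)}(x)$ be the unique dyadic cylinder of length $G_J$ containing $x$. By the definition of $\mu$ in Step 7,
\[
\mu\big(C^{(J)}(x)\big)=\frac12\prod_{j=2}^{J}\mu_{\Sigma(y_1\cdots y_j)}(C_j),
\]
where $C_j$ is the length-$L_j$ block of $x$ sitting inside $\Sigma(y_1\cdots y_j)$. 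The first task is to show
\[
\lim_{J\to\infty}\frac{-\log_2\mu\big(C^{(J)}(x)\big)}{G_J}=\frac{h(x)}{\log 2},
\]
equivalently $\lim_{J\to\infty}\frac{-\log\mu(C^{(J)}(x))}{G_J}=h(x)$ (with $\log=\log_e$; the statement's $h(x)/\log 2$ accounts for the change of base between $\log_2$-lengths and the natural-log entropy normalisation — I will keep careful track of this throughout). For this I would use the Gibbs-type bounds from Step 2: for $j\geq N_J$ (hence for $L_j$, which exceeds $M_j+N_j$ by the lacunarity of Step 4), each factor satisfies $2^{-(h_{top}(\Sigma(y_1\cdots y_j))+\delta_j)L_j}<\mu_{\Sigma(y_1\cdots y_j)}(C_j)<2^{-(h_{top}(\Sigma(y_1\cdots y_j))-\delta_j)L_j}$. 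Summing $-\log_2$, the main term is $\sum_{j=2}^J h_{top}(\Sigma(y_1\cdots y_j))L_j$, the error is $\sum_{j=2}^J\delta_j L_j$ plus a bounded constant, and because $\delta_j\to 0$ while $h_{top}(\Sigma(y_1\cdots y_j))\to h(x)$, a Cesàro/Stolz argument against the denominator $G_J\sim L_J$ (itself forced by $L_{J+1}/L_J\to\infty$) yields the claimed limit. I would also record here that $G_{J+1}/G_J\to\infty$, which makes the generations $G_J$ sufficiently sparse.

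**Interpolating between generations.**

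The second, and I expect more delicate, task is to handle radii $r$ with $2^{-G_{J+1}}\le r<2^{-G_J}$, i.e. scales falling strictly between two completed generations. Here $x$ sits in a dyadic cylinder $D$ of some length $\ell\in[G_J,G_{J+1}]$ with $2^{-\ell}\approx r$, and $D$ has the form $C^{(J)}(x)C'$ where $C'$ is a prefix of length $\ell-G_J$ of a cylinder in $\Sigma(y_1\cdots y_{J+1})$. The mass $\mu(D)$ is not literally defined (it is not a generation-$G_{J'}$ cylinder), so I would bound $\mu(B(x,r))$ from below by $\mu$ of the generation-$G_{J+1}$ cylinders inside $D$ meeting $K$, and from above by $\mu$ of $D$ together with its at most two neighbours at length $\ell$. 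Using the Gibbs bounds for $\Sigma(y_1\cdots y_{J+1})$ applied to the sub-block $C'$ of length $\ell-G_J$ (valid once $\ell-G_J\ge N_{J+1}$; the regime $\ell-G_J<N_{J+1}$ contributes only a bounded multiplicative error since $N_{J+1}\le L_{J+1}/2$ is negligible compared with $G_{J+1}$), one gets
\[
\mu(B(x,r))\asymp \mu\big(C^{(J)}(x)\big)\cdot 2^{-h_{top}(\Sigma(y_1\cdots y_{J+1}))(\ell-G_J)}
\]
up to factors that are subexponential in $\ell$. Taking $-\log_2$ and dividing by $\ell$, the numerator is $-\log_2\mu(C^{(J)}(x))+h_{top}(\Sigma(y_1\cdots y_{J+1}))(\ell-G_J)/\log 2+o(\ell)$; since $h_{top}(\Sigma(y_1\cdots y_{J+1}))\to h(x)$ and $-\log_2\mu(C^{(J)}(x))\sim h(x)G_J/\log 2$ from Step 1, this ratio converges to $h(x)/\log 2$ uniformly over $\ell\in[G_J,G_{J+1}]$, regardless of where in the interval $\ell$ lands. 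Combining the two regimes and taking $\liminf_{r\to 0}$ gives $d_\mu(x)=h(x)/\log 2$.

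**Main obstacle.**

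The step I expect to be the real obstacle is the \emph{upper} bound on $\mu(B(x,r))$ for intermediate scales, because one must rule out that a short cylinder $D$ containing $x$ accidentally carries anomalously large mass — this is precisely what the elaborate bookkeeping of Steps 2–4 (the numbers $N(\Sigma,\delta)$, $m_{C_j}$, $M_J$, and the lacunarity conditions on $L_J$) is designed to prevent. The key point to extract is that the estimate \eqref{eq1}, which controls products $\mu_{\Sigma(y_1\cdots y_{J-1})}(C'_m)\cdot\mu_{\Sigma(y_1\cdots y_J)}(C_j)$ by $2^{\mp(h_{top}(\Sigma(y_1\cdots y_{J-1}))\pm\delta_J)(m+j)}$, is exactly what is needed to estimate the mass of a cylinder that straddles the boundary $G_J$: the block from generation $J$ of length $j$ together with an initial block of length $m$ from generation $J+1$ has mass comparable to $2^{-h_{top}(\Sigma(y_1\cdots y_J))(m+j)}$, with $h_{top}(\Sigma(y_1\cdots y_J))$ close to $h(x)$. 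One should also double-check that the ball $B(x,r)$, when it is larger than the generation-$J$ cylinder of $x$, cannot pick up cylinders with labels $\uy$ far from $\uy(x)$: this is guaranteed by the diameter control \eqref{diam}, which ensures that all of $K$ within distance $2^{-J}$ of $x$ shares the prefix $y_1\cdots y_J(x)$, so the relevant masses all involve subshifts with entropy close to $h(x)$. Once these uniformity statements are in place, the proof is a matter of assembling the estimates; the analysis is genuinely $\liminf$ only in that different subsequences of $r$ may be limited by $\delta_j$-errors at slightly different rates, but since all errors tend to $0$ the limit exists and equals $h(x)/\log 2$.
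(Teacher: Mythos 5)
Your overall strategy is the same as the paper's: first establish the limit along the completed generations $G_J=1+L_2+\cdots+L_J$ using the Step~2 Gibbs bounds, then interpolate at intermediate lengths $\ell=G_J+n$, $1\le n<L_{J+1}$, splitting according to whether $n$ exceeds $N_{J+1}$. The one place where your written argument would fail is the short-overhang regime $1\le n\le N_{J+1}$. You dismiss it as ``a bounded multiplicative error \dots\ negligible compared with $G_{J+1}$'', but the relevant denominator there is $\ell=G_J+n$, not $G_{J+1}$, and the multiplicative error is the mixing constant $M_{\Sigma(y_1\cdots y_{J+1})}$ of the \emph{new} subshift, which is chosen in Step~1 with no control whatsoever in terms of $G_J$ (only $L_{J+1}$, via $N_{J+1}$ and $M_{J+1}$, is adapted to it). Since $N_{J+1}$, and with it $\log_2 M_{\Sigma(y_1\cdots y_{J+1})}$, may dwarf $G_J$, the deviation of $\mu_{\Sigma(y_1\cdots y_{J+1})}(C')$ from $2^{-h_{top}\,n}$ need not be $2^{o(\ell)}$. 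This is exactly why the paper does not estimate the short block alone: for $n\le N_{J+1}$ it applies (\ref{eq1}) to the \emph{product} of the full length-$L_J$ block in $\Sigma(y_1\cdots y_J)$ with the length-$n$ block in $\Sigma(y_1\cdots y_{J+1})$, a product comparable to $2^{-h_{top}(\Sigma(y_1\cdots y_J))(L_J+n)}$ because $L_J$ exceeds the waiting time introduced in Step~3. You do name (\ref{eq1}) as ``exactly what is needed'' for straddling cylinders in your closing discussion, but you never actually deploy it in the estimate; as written, the interpolation step has a gap precisely where the bookkeeping of Steps 3--4 is doing its work. (On the other hand, your explicit treatment of balls versus cylinders and of neighbouring cylinders via (\ref{diam}) is a point where you are more careful than the paper, which argues with cylinders only.)
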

%%%%%%%%%%%%%%%%%%%%%
%%%%%%%%%%%%%%%%%%%%%
\begin{proof}
The point is to prove that the liminf used when defining $d_\mu(x)$ is in fact a limit, and that it coincides with $h(x)$.

Let us first prove that 
\begin{equation}
\label{eq2}
 \frac{\log \mu( C_{1+L_2+\cdots L_J}(x)  ) }{ - \log_2( 1+L_2+\cdots L_J)}  \  {\longrightarrow} \ h(x)
 \end{equation}
 when  $J\to +\infty$.
Once (\ref{eq2}) will be proved, we will have to take care of the generations between $1+L_2+\cdots L_J$ and $1+L_2+\cdots L_{J+1}$.

Let $J\geq 1$.  We use the decomposition  (\ref{eq3}) of  the cylinder $C_{1+L_2+\cdots L_J}(x)$. By our choice for $L_J$ in Step 4, we have
\begin{eqnarray*}
\mu( C_{1+L_2+\cdots L_J}(x)  ) & = & \frac{1}{2}\, \prod_{j=2}^J   \mu_{\Sigma(y_1\cdots y_j(x))}(C_j)\\
&\leq &  \prod_{j=2}^J 2^{ - \big(  h_{top}\big(\Sigma(y_1\cdots y_j(x))\big)- \delta_ { j} \big ) L_i}\\
&\leq &  {2^{ - \big(   h_{top}\big(\Sigma(y_1\cdots y_J(x))\big)- \delta_ { J} \big) L_J} }   2^{- P_JL_J  },
\end{eqnarray*}
where 
\begin{eqnarray*}
P_J := \sum_{j=2}^{J-1}  \big(   h_{top}\big(\Sigma(y_1\cdots y_J(x)) \big)-\delta_ { j} \big) \frac{L_j}{L_J}  \, \geq \,   \sum_{j=2}^{J-1}   a \frac{L_j}{L_J}  \,  \geq  \,\delta_ {J},
\end{eqnarray*}
the last inequality following from Step 4 and the definition of $\delta_J$. Hence,
\begin{eqnarray}
\label{eq4}
\mu( C_{1+L_2+\cdots L_J}(x)  ) &\leq &  {2^{ - \big(   h_{top}\big(\Sigma(y_1\cdots y_J(x))\big)-  2\delta_ {J}\big) L_J} }   .
\end{eqnarray}

The same inequality in Step 4 ensures that $|C_{1+L_2+\cdots L_n}(x)  | = 2^{-(1+L_2+\cdots L_J)} $ is upper and lower-bounded respectively by $2^{-L_J (1 - \delta_ J)}$ and  $2^{-L_J (1 + \delta_ J)}$. We deduce that
\begin{equation}
\label{majmin1}
\mu( C_{1+L_2+\cdots L_J}(x)  )  \leq |C_{1+L_2+\cdots L_J}(x)  | ^{ \big(   h_{top}\big(\Sigma(y_1\cdots y_J(x))\big)- 2\delta_ {J} \big)\big(1 -\delta_ {J}\big) }.
\end{equation}
The same arguments yield the converse inequality
\begin{equation}
\label{majmin2}
\mu( C_{1+L_2+\cdots L_J}(x)  )  \geq |C_{1+L_2+\cdots L_J}(x)  | ^{   \big(   h_{top}\big(\Sigma(y_1\cdots y_J(x)\big) +2\delta_{J} \big)\big(1 +\delta_ {J}\big) },
\end{equation}
and taking logarithms, (\ref{eq2}) follows.

\mk

Let now $n$ be an integer in $\{1, \cdots,  L_{J+1}-1\}$, and consider $C_{1+L_2+\cdots L_J +n}(x)$. We write $C_{1+L_2+\cdots L_J +n}(x) = C_1\cdots C_J C_{J+1}$ with $|C_j|=L_j$ for every $j \leq J$, and $|C_{J+1}|=n$.

\begin{itemize}

\sk\item
{\bf If $1 \leq n \leq N_{J+1}$:}  we get 
\begin{eqnarray*}
\mu(C_{1+L_2+\cdots L_J +n}(x))  & = &  \frac{1}{2}\prod_{j=2}^{J+1}\mu_{\Sigma(y_1\cdots y_j(x))}(C_j)\\
& = &  \frac{1}{2}\prod_{j=2}^{J-1}\mu_{\Sigma(y_1\cdots y_j(x))}(C_j)\\
&& \ \ \  \times \mu_{\Sigma(y_1\cdots y_J(x))}(C_J) \cdot \mu_{\Sigma(y_1\cdots y_{J+1} (x))}(C_{J+1})\\
& \leq &{2^{ - \big(   h_{top}\big(\Sigma(y_1\cdots y_{J-1}(x))\big)- 2\delta_ {{J-1}}\big) L_{J-1}} }  \\
&& \ \ \  \times 2^{-(h_{top}(\Sigma(y_1\cdots y_J (x)))- \delta_ { J})(L_J+n)},
\end{eqnarray*}
where (\ref{eq4}) and (\ref{eq1}) have been used to bound from above respectively the first  and the second product.

Using the same arguments as above, we see that 
\begin{eqnarray}
\label{majmin3,5}
\mu(C_{1+L_2+\cdots L_J + n}(x))  
% & \leq   &  2^{-(h_{top}(\Sigma(x_1\cdots x_{n} (C_1\cdots C_{n })))- \delta_n)(k_n+j)}\\
&  \leq  & |C_{1+L_2+\cdots L_J + n }(x))| ^{h_{top}(\Sigma(y_1\cdots y_J(x)))- \delta'_J} ,
\end{eqnarray}
where $(\delta'_J)_{J\geq 2}$ is some other positive sequence converging to zero when $J$ tends to infinity.  
\

\sk\item
{\bf If $N_{J+1}+1  \leq  n \leq L_{J+1} -1$:}  we have 
 \begin{eqnarray*}
\mu(C_{1+L_2+\cdots L_J +n}(x))  & = &  \frac{1}{2}\prod_{ j =2}^{J+1}\mu_{\Sigma(y_1\cdots y_j (x))}(C_j)\\
& = &  \frac{1}{2}\prod_{j=2}^{J}\mu_{\Sigma(y_1\cdots y_j(x))}(C_j)  \times  \mu_{\Sigma( y_1\cdots y_{J+1} (x))}(C_{J+1})\\
& \leq &{2^{ - \big(   h_{top}\big(\Sigma( y_1\cdots y_J (x))\big) - 2\delta_ {{J }}\big) L_J} } \\
&&  \cdot  2^{-(h_{top}(\Sigma(y_1\cdots y_{J+1} (x)))-\delta_  {J+1}) n },
\end{eqnarray*}
where (\ref{eq4}) and Step 2 of the construction have been used to bound from above respectively the first  and the second product.

Using the same arguments as above, we see that 
\begin{equation}
\label{majmin4}
\mu(C_{1+L_2+\cdots L_J + n }(x))  
% & \leq   &  2^{-(h_{top}(\Sigma(x_1\cdots x_{n} (C_1\cdots C_{n })))- \delta_n)(k_n+j)}\\
  \leq   |C_{1+L_2+\cdots L_J + n}(x))| ^{ h_{J,n}} ,
\end{equation}
where $ h_{J,n} $  is a real number  between $   h_{top}\big(\Sigma(y_1\cdots y_J(x))\big)-2\delta_ {{J }} $ and $ h_{top}(\Sigma(y_1\cdots y_{J+1} (x)))-\delta_ { {J+1}} $, which gets closer and closer  to  the exponent  $ h_{top}(\Sigma(y_1\cdots y_{J+1} (x)))- \delta_ { {J+1}}   $ when $n$ tends to $L_{J+1}$.  

In particular, $h_{J,n}$ converges to $h(x) $ when $J$ tends to infinity, uniformly in $n\in \{1, \cdots,  L_{J+1}-1\}$.

\sk\item
The converse inequalities are proved using the same ideas.

\end{itemize}
\end{proof}
%%%%%%%%%%%%%%%%%%%%%

To finish the proof of Theorem \ref{main}, we make the following observations.

By construction, we see that the support $S$ of the multifractal spectrum of $\mu$  is actually the image of the middle-third Cantor set by the map $\alpha \mapsto a+ (b-a)\alpha$. We deduce that $S \subset [a,b]$, $\min(S)=a$ and $\max(S)=b$, and that $D_\mu(\alpha)$ contains either 0 or 1 point, for every $\alpha \geq 0$.  This   proves  parts iii) and iv) of Theorem \ref{main}, and also part i) of Theorem \ref{main2}.

%To prove that the dimension of $K$ is $b$, then we can consider  the measure $\mu$ restricted to $I_\eta= [\max K -\eta, \max K]$. From the proof we see that there exists $\varepsilon (\eta)$ such that for every $x\in I_\eta$, $\mu(B(x,r)) \leq r^{b-\varepsilon (\eta)}$ for $r$ small enough. Hence $\dim K \geq b-\varepsilon (\eta)$, and we let $\eta$ go to zer
%
%\begin{com}
%Now we can say that the dimension of $K$ is effectively $b$ (once we have the measure $\mu$). 
%But apparently   there is an obvious argument that I missed...
%\end{com}
%%%%%%%%%%%%%%%%%%%%%%%%%%%%%%%%%%%%%%%%%%%%%%%%%%%%%%%%%%%%%%%%%%%%%%%%%%
%%%%%%%%%%%%%%%%%%%%%%%%%%%%%%%%%%%%%%%%%%%%%%%%%%%%%%%%%%%%%%%%%%%%%%%%%%
%%%%%%%%%%%%%%%%%%%%%%%%%%%%%%%%%%%%%%%%%%%%%%%%%%%%%%%%%%%%%%%%%%%%%%%%%%
%%%%%%%%%%%%%%%%%%%%%%%%%%%%%%%%%%%%%%%%%%%%%%%%%%%%%%%%%%%%%%%%%%%%%%%%%%
%%%%%%%%%%%%%%%%%%%%%%%%%%%%%%%%%%%%%%%%%%%%%%%%%%%%%%%%%%%%%%%%%%%%%%%%%%
%%%%%%%%%%%%%%%%%%%%%%%%%%%%%%%%%%%%%%%%%%%%%%%%%%%%%%%%%%%%%%%%%%%%%%%%%%

\section{The large deviations and the Legendre spectra}
\label{SecLD}

We prove Theorem \ref{main2}.  
 
Recall that the Cantor set $K$ is the support of $\mu$ and that $S =\{ d_\mu(x):x\in K\}$ is the image of the middle-third Cantor set by an affine map.

%%%%%%%%%%%%%%%%%%%%%%%%%%%%%%%%%
%%%%%%%%%%%%%%%%%%%%%%%%%%%%%%%%%
\subsection{The large deviations spectrum}

First, let $\alpha \in   S$, and  let $x_\alpha$ be the unique point such that $d_\mu(x_\alpha)=\alpha$. One will use the labelling $y_1\cdots y_j(x_\alpha)$, since by construction  one has $\alpha = \lim_{j\to +\infty} a+(b-a)  \times 0,y_1\cdots y_j(x_\alpha)$.

Let $\e>0$. Due to our construction, there exists a real number $\eta(\e)$, that converges to zero when $\e$ tends to zero, such that  $|h_{top}(\Sigma(y_1\cdots y_j(x))) -\alpha | \leq 2\e$ implies that $|x-x_\alpha|\leq \eta(\e)$.

By construction, there exists a generation $J_\e$ such that for every $j\geq J_\e$, $|h_{top}(\Sigma(y_1\cdots y_j(x_\alpha) )) - \alpha|\leq \e  $. Moreover,  $J_\e$ can be chosen large enough that $\delta_{J_\e} \leq \e /2$.

Observe that if $\tilde C$ is a cylinder of generation $j\geq J_\ep$ such that  
\begin{equation}
\label{ineg11}
 |\tilde C| ^{\alpha+\e}   \leq  \mu(\tilde C) \leq  |\tilde C| ^{\alpha-\e} ,
\end{equation}  
is satisfied, then 
by (\ref{majmin3,5}), (\ref{majmin4}) and our choice for $J_\ep$,  $\tilde C$ is necessarily included in a cylinder $ C$ of generation $J_\ep$ such that 
\begin{equation}
\label{ineg10}
|y_1\cdots y_{J_\ep}(x_\alpha) 
- y_1\cdots y_{J_\ep}(C)| \leq \eta(\ep).
\end{equation}

 Hence, to bound by above the number $N_j(\alpha,\ep)$ (defined by (\ref{defNJ})), it is sufficient to count the number of cylinders $\tilde C$ of generation $j$  included in    the  cylinders $ C$ of generation $J_\ep$  such that (\ref{ineg10}) holds. 
 
 Let us denote by $M_{\alpha,\ep} $ the number of   cylinders $ C$  of generation ${J_\ep}$  satisfying (\ref{ineg10}), and fix $C_{J_\ep}$  such a cylinder.
 
 Obviously, all the subshifts of finite type $\Sigma$ which are used in the construction of $K$ inside $C_{J_\ep}$ have a topological entropy   which satisfies $|h_{top}(\Sigma) -\alpha| \leq 2\e$. Hence, it is an easy deduction of the preceding considerations that  the number of cylinders of generation $j$ included in $C_{J_\ep}$ is  lower- and upper-bounded by
 $$ 2^{ (\alpha-2\ep) j}  <  \# \{C\in \mathcal{G}_j: C\subset C_{J_\ep}  \mbox{ and }  C \cap K \neq \emptyset \}< 2^{(\alpha+2\e)j} . $$
 Consequently, 
 $$  N_j(\alpha,\ep) \leq M_{\alpha,\ep}  2^{(\alpha+2\e)j}.$$
 Taking the liminf of $\displaystyle \frac{\log_2 N_j(\alpha,\ep) }{j}$ when $j$ tends to infinity, and letting $\ep$ go to zero, we find that $LD_\mu(\alpha) \leq  \alpha$. 
  
 \mk
 
 One gets the lower bound using what precedes. Indeed,  in the above proof, all the cylinders  $C\in \mathcal{G}_j$ satisfying $C\subset C_{J_\ep}  \mbox{ and }  C \cap K \neq \emptyset \}$  verify $$ |C|^{\alpha+3\ep} \leq \mu(C) \leq  |C|^{\alpha-3\ep}. $$
  Hence 
 $$ M_{\alpha,\ep}   2^{ (\alpha-2\ep) j}  \leq N_j(\alpha,3\ep).$$
 By taking a liminf and letting $\ep$ go to zero, we get  that $LD_\mu(\alpha) \geq  \alpha$.

\mk

If $\alpha \notin S$, then there exists $\ep>0$ such that $[\alpha-2\ep,\alpha+2\ep]\cap S =\emptyset$. Hence, using again  (\ref{majmin3,5}), (\ref{majmin4}) and  choosing $J$ sufficiently large so that $\delta_J\leq \ep/2$, one sees that for every cylinder $C$ of generation $j\geq J_\ep$ such that $C\cap K \neq \emptyset$, $\mu(C)\notin [ |C|^{\alpha+\ep} , |C|^{\alpha-\ep}]$. Consequently, $N_j(\alpha,\ep)=0$ and $LD_\mu(\alpha)=-\infty$.

%%%%%%%%%%%%%%%%%%%%%%%%%%%%%%%%%
%%%%%%%%%%%%%%%%%%%%%%%%%%%%%%%%%
\subsection{The Legendre  spectrum}

\

Finally, we compute the Legendre spectrum. Obviously $\tau_\mu(1)=0$, and $\tau_\mu(0)= \dim_B
 \mu =b$, where $\dim_B$ stands for the Minkovski dimension.This is actually relatively easy with what precedes. Indeed, we  proved that for every $\ep>0$,
if $j$ is large enough, then all cylinders $C$ of generation $j$ such that $C\cap K \neq \emptyset$ satisfy
$$2^{- j (b+\ep)} \leq \mu(C) \leq 2^{-j(a-\ep)}.$$

Let us cover the set $S=\{\alpha\geq 0: D_\mu(\alpha)\neq \emptyset\}$ by a finite set of intervals $(I_n)_{n=1,\cdots, N}$ of the form $I_n= [\alpha_n-\ep,\alpha_n+\ep]$, where for every $n\in \{1,2,\cdots, N\}$, $\alpha_n \in S$, and $\alpha_1=a$ and $\alpha_N=b$.   For every $n$, the estimates above yield that if $j$ is large,
$$ 2^{j(\alpha_n-\ep_n)} \leq N_j(\alpha_n,\ep) \leq 2^{j(\alpha_n+\ep_n)},$$
where $\ep_n$ is some positive real number converging to zero when $\ep$ goes to zero. Hence we find that for $q>0$,
$$ \sum_{n=1}^N 2^{j (\alpha_n - \ep_n)}  2 ^{-qj(\alpha_n+ \ep)}     \leq  \sum_{C\in \mathcal{G}_j}  \ \mu(C)^q  \leq \sum_{n=1}^N 2^{j(\alpha_n+\ep_n)}2^{-qj(\alpha_n-\ep)}.$$
If $q>1$,  then  the right hand-side term is equivalent to $2^{j(a(1-q) +\ep_1 +q\ep)}$, and the left hand-side term is equivalent to $2^{j(a(1-q) -\ep_1 -q\ep)}$. Hence, by taking liminf when $j$ tends to infinity, we obtain $\tau_\mu(q) = a(q-1)$.

If $q\in (0,1)$, then  the right hand-side term is equivalent to $2^{j(b(1-q) +\ep_N +q\ep)}$, and the left hand-side term is equivalent to $2^{j(b(1-q) -\ep_N-q\ep)}$. We deduce that  $\tau_\mu(q) = b(q-1)$.

Finally, when  $q<0$ one has 
$$ \sum_{n=1}^N 2^{j (\alpha_n - \ep_n)}  2 ^{-qj(\alpha_n+ \ep_n)}     \leq  \sum_{C\in \mathcal{G}_j}  \ \mu(C)^q  \leq \sum_{n=1}^N 2^{j(\alpha_n+\ep_n)}2^{-jq(\alpha_n-\ep_n)}.$$
The same estimates yield that $\tau_\mu(q)= b(q-1)$.


\begin{thebibliography}{99}

%\bibitem[B]{B} P.\ Billard,


\bibitem[BPS]{BPS} L.\ Barreira, Y.\ Pesin, and J.\ Schmeling, 
{\em On a general concept of multifractality: multifractal spectra 
for dimensions, entropies, and Lyapunov exponents. Multifractal
rigidity},
Chaos  7  (1997) 27--38.
 
\bibitem[BSS]{BSS}
 L.\ Barreira, B.\ Saussol and J.\ Schmeling, 
\emph{ Higher-dimensional  multifractal analysis}, 
J.\ Math.\ Pures Appl.\ 81  (2002) 67--91. 


\bibitem[B]{Bowen} R.\ Bowen, \emph{Entropy for non-compact sets},
Trans.\ Amer.\ Math.\ Soc.\ 184 (1973) 125--136.

\bibitem[P]{Pesin} Y.\ Pesin, {\it Dimension theory in dynamical systems,}
University of Chicago Press, Chicago, 1997.

\bibitem[PW]{PW} Y.\ Pesin and H.\ Weiss, 
{\em The multifractal analysis of Gibbs measures: 
motivation, mathematical foundation, and examples,}
Chaos  7  (1997) 89--106.

\bibitem[R]{RIEDI} R. Riedi,  Multifractal processes, Doukhan, Paul
  (ed.) et al., Theory and applications of long-range dependence,
  625--716 (2003)

 
\end{thebibliography}
\end{document}